\newtheorem{thm}{Theorem}[section]
\newtheorem{cor}[thm]{Corollary}
\newtheorem{exam}[thm]{Example}
\newtheorem{prop}[thm]{Proposition}
\theoremstyle{definition}
\theoremstyle{remark}
\numberwithin{equation}{section}
\begin{document}

\title[]
{Isometric and Quasi-isometric weighted composition operators}

\author{\sc\bf M. S. Al Ghafri, Y. Estaremi and M. Z. Gashti}
\address{\sc M. S. Al Ghafri}
\email{mohammed.alghafri@utas.edu.om}
\address{Department of Mathematics, University of Technology and Applied Sciences, Rustaq {329},  Oman.}
\address{\sc Y. Estaremi}
\email{y.estaremi@gu.ac.ir}
\address{Department of Mathematics, Faculty of Sciences, Golestan University, Gorgan, Iran.}

\address{\sc M. Z. Gashti}
\email{gashti@ieee.org}
\address{German Informatics Society, Hamburg, Germany.}

\thanks{}

\thanks{}

\subjclass[2020]{47B33}

\keywords{Weighted Composition operator, m-isometric operator, quasi-isometric operators.}

\date{}

\dedicatory{}

\commby{}

\begin{abstract}
In this paper we characterize $m$-isometric and quasi-$m$-isometric weighted composition operators on the Hilbert space $L^2(\mu)$. Also, we find that normal-$m$-isometry and normal quasi-$m$-isometry weighted composition operators have finite spectrum. Consequently we have the results for composition and multiplication operators. In addition, we prove that for $m\geq 2$, a multiplication operator is $m$-isometry (quasi-$m$-isometry) if and only if it is $2$-isometry (quasi-$2$-isometry). Some examples are provided to illustrate our results.

\end{abstract}

\maketitle

\section{ \sc\bf Introduction and Preliminaries}

Let  $\mathcal{B}(\mathcal{H})$ be the Banach algebra of bounded linear operators acting on an infinite
dimensional complex Hilbert space $\mathcal{H}$. An operator $T\in \mathcal{B}(\mathcal{H})$ is called $m$-isometry if 
$$B_m=\sum_{k=0}^{m}(-1)^{m-k}\binom{m}{k}T^{*^{k}}T^{k}=0$$
and also $T$ is called quasi-m-isometry if
 $$\sum_{k=0}^{m}(-1)^{m-k}\binom{m}{k}T^{*^{k+1}}T^{k+1}=T^*B_mT=0.$$ 
 Specially, the operator $T$ is quasi-2-isometry if 
$$T^{*^3}T^3-2T^{*^2}T^2+T^*T=0.$$

It is well known that for all $m\geq 2$,
$$\{\text{isometric operators}\} \subseteq \{\text{2-isometric operators}\} \subseteq \{\text{m-isometric operators}\}.$$
Also, $T$ is called quasi-isometry if $T^{*^2}T^2=T^*T$. Clearly every
quasi-2-isometric operator is quasi-$m$-isometric (\cite{mp1}, Theorem 2.4). Moreover, the
classes of quasi-isometric operators and 2-isometric operators are sub-classes of the
class of quasi-2-isometric operators. In fact, inclusions are proper. There are some examples of quasi-2-isometric operators which are not quasi-isometric, one can see (\cite{mp1}, Example 1.1). For more details on $m$-isometric operators one can see \cite{as3, ber, bj,bm,cot,jl,je,mp} and references therein.\\
In this paper we are going to characterize $m$-isometric and quasi-$m$-isometric weighted composition operators on the Hilbert space $L^2(\mu)$.

\section{ \sc\bf Main Results}

Let $(X, \mathcal{F}, \mu)$ be a measure space, $\varphi:X\rightarrow X$ be a non-singular measurable transformation (we mean $\mu\circ\varphi^{-1}\ll\mu$), $h=\frac{d\mu\circ\varphi^{-1}}{d\mu}$ (the Radon-Nikodym derivative of the measure $\mu\circ\varphi^{-1}$ with respect to the measure $\mu$), $Ef=E(f|\varphi^{-1}(\mathcal{F}))$ (the conditional expectation of $f$ with respect to the sigma subalgebra $\varphi^{-1}(\mathcal{F})$ of $\mathcal{F}$) and $C_{\varphi}f=f\circ\varphi$, for every $f\in L^2(\mu)$, be the related composition operator on $L^2(\mu)$.
We recall that for every $f\in L^2(\mu)$,
\begin{itemize}
 \item $C^*_{\varphi}f=hE(f)\circ\varphi^{-1}$
 \item $C^*_{\varphi}C_{\varphi}f=h.f$,
 \item $C_{\varphi}C^*_{\varphi}f=(h\circ\varphi)Ef$
 \item $|C_{\varphi}|f=\sqrt{h}.f$.
\end{itemize}
And let $W=M_uC_{\varphi}$ be the weighted composition operator on $L^2(\mu)$, in which $u:X\rightarrow \mathbb{C}$ is a positive measurable function and $P$ is the orthogonal projection of $L^2(\mu)$ onto $\overline{\mathcal{R}(W)}$. Then we have
\begin{itemize}
 \item $W^*f=hE(u.f)\circ\varphi^{-1}$,
 \item $W^*Wf=hE(u^2)\circ\varphi^{-1}.f$,
 \item $WW^*f=J\circ\varphi P(f)=u.(h\circ\varphi)E(uf)$,
 \item $|W|f=\sqrt{hE(u^2)\circ\varphi^{-1}}.f$.
\end{itemize}

For $n\in \mathbb{N}$, let $h_n=\frac{d\mu\circ\varphi^{-n}}{\mu}$, $u_n=\prod^{n-1}_{k=0}u\circ\varphi^k$ and 
$$J_n=h_nE_n(|u_n|^2)\circ\varphi^{-n},$$
in which $E_n$ is the conditional expectation operator with respect to the sigma subalgebra $\varphi^{-n}(\mathcal{F})$ of $\mathcal{F}$. In particular we denote $J=J_1$ and $h=h_1$. Since $W^n(f)=u_nf\circ\varphi^n$, for every $n\in\mathbb{N}$ and $f\in L^2(\mu)$, then $W^n$, for $n\geq 2$, is also a weighted composition operator and we have 
\begin{align*}
\int_XJ_n|f|^2d\mu&=\|W^nf\|^2\\
&=\int_X|u_n|^2|f|^2\circ\varphi^nd\mu\\
&\int_Xh_{n-1}E_{n-1}(|u_n|^2|f|^2\circ\varphi^n)\circ\varphi^{-(n-1)}d\mu\\
&\int_Xh_{n-1}E_{n-1}(|u_{n-1}|^2)\circ\varphi^{-(n-1)}|u|^2|f|^2\circ\varphi d\mu\\
&\int_XJ_{n-1}|u|^2|f|^2\circ\varphi d\mu\\
&\int_XhE(J_{n-1}|u|^2)\circ\varphi^{-1}|f|^2d\mu.
\end{align*}
 This implies that
  $$J_n=hE(J_{n-1}|u|^2)\circ\varphi^{-1},$$
 and so for $n\geq 2$ if $u\equiv 1$, then 
 $$h_n=hE(h_{n-1}|u|^2)\circ\varphi^{-1}.$$ For more information about composition and weighted composition operators on $L^2(\mu)$, one can see \cite{bbl, cj}.\\

Now by using basic properties of weighted composition operators, as mentioned above, we characterize $m$-isometric and quasi-$m$-isometric weighted composition operators on $L^2(\mu)$.

\begin{thm}\label{t1.6}
Let $W=uC_{\varphi}$ be a bounded operator on $L^2(\mu)$ and $m\in \mathbb{N}$. Then $W$ is quasi-m-isometry if and only if
$$G_m=\sum_{k=0}^{m}(-1)^{m-k}\binom{m}{k}J_{k+1}=0,\ \ \ \text{$\mu$-a.e},$$

in which $J_0=1$ and $J_n=h_nE_n(|u_n|^2)\circ\varphi^{-n}=hE(J_{n-1}|u|^2)\circ\varphi^{-1}$.
Moreover, $W$ is m-isometry if and only if 
$$G^0_m=\sum_{k=0}^{m}(-1)^{m-k}\binom{m}{k}J_{k}=0, \ \ \ \text{$\mu$-a.e}.$$
\end{thm}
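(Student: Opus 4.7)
The plan is to reduce both equivalences to the single identification of $W^{*n}W^n$ with the multiplication operator by $J_n$ on $L^2(\mu)$, and then to read off the two claims from the definitions of $m$-isometry and quasi-$m$-isometry.

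First I would verify that $W^{*n}W^n = M_{J_n}$ for every $n \in \mathbb{N}$. The formula $W^nf = u_n \cdot f\circ\varphi^n$ together with the iterated computation already carried out in the preamble gives
$$\langle W^{*n}W^nf,f\rangle = \|W^nf\|^2 = \int_X J_n|f|^2\,d\mu = \langle M_{J_n}f,f\rangle$$
for every $f\in L^2(\mu)$. Since both $W^{*n}W^n$ and $M_{J_n}$ are self-adjoint, the polarization identity forces the operator equality $W^{*n}W^n = M_{J_n}$. Observe in particular that $J_0 = 1$ is consistent with $W^{*0}W^0 = I$.

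For the $m$-isometry statement I would then linearize. The defining identity $B_m = 0$ becomes, using the previous step,
$$B_m = \sum_{k=0}^{m}(-1)^{m-k}\binom{m}{k}W^{*k}W^k = \sum_{k=0}^{m}(-1)^{m-k}\binom{m}{k}M_{J_k} = M_{G^0_m}.$$
Because a multiplication operator $M_g$ on $L^2(\mu)$ vanishes precisely when $g = 0$ $\mu$-a.e., one obtains $B_m = 0$ iff $G^0_m = 0$ $\mu$-a.e., which is exactly the claimed characterization of $m$-isometry.

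For the quasi-$m$-isometry statement I would exploit the fact that $W^*B_mW$ admits a parallel expansion,
$$W^*B_mW = \sum_{k=0}^{m}(-1)^{m-k}\binom{m}{k}W^{*(k+1)}W^{k+1} = \sum_{k=0}^{m}(-1)^{m-k}\binom{m}{k}M_{J_{k+1}} = M_{G_m},$$
so again $W^*B_mW = 0$ if and only if $G_m = 0$ $\mu$-a.e. I do not expect a serious obstacle here; the only delicate point is passing from the quadratic-form identity $\|W^nf\|^2 = \int J_n|f|^2 d\mu$ to the operator identity $W^{*n}W^n = M_{J_n}$, but this is handled cleanly by polarization and self-adjointness. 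Everything else is bookkeeping with the binomial-signed sums.
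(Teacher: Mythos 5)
Your proposal is correct and follows essentially the same route as the paper: both arguments rest on the identification $W^{*n}W^{n}=M_{J_{n}}$, expand the defining signed binomial sums into the multiplication operators $M_{G^{0}_{m}}$ and $M_{G_{m}}$, and conclude by the fact that a multiplication operator vanishes iff its symbol vanishes $\mu$-a.e. (the paper phrases this last step via the quadratic form $\langle G_{m}f,f\rangle=0$ for all $f$, which is the same point). Your explicit polarization argument for upgrading $\|W^{n}f\|^{2}=\int_{X}J_{n}|f|^{2}\,d\mu$ to the operator identity is a small touch of added care over the paper, which simply asserts that identity.
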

\begin{proof}
As we know, for each $n\in \mathbb{N}$, we have $W^{n^*}W^n=M_{J_n}$. Hence $W$ is quasi-m-isometry if and only if
$$\sum_{k=0}^{m}(-1)^{m-k}\binom{m}{k}W^{*^{k+1}}W^{k+1}=0$$
iff

$$\sum_{k=0}^{m}(-1)^{m-k}\binom{m}{k}M_{J_{k+1}}=0$$
iff
$$\langle \sum_{k=0}^{m}(-1)^{m-k}\binom{m}{k}M_{J_{k+1}}f,f\rangle=0,$$
 for all $f\in L^2(\mu)$.
So $W$ is quasi-m-isometry if and only if
$$\langle G_mf,f\rangle=\langle \sum_{k=0}^{m}(-1)^{m-k}\binom{m}{k}J_{k+1}f,f\rangle=0,$$
 for all $f\in L^2(\mu)$. This means that $W$ is quasi-$m$-isometry if and only if $G_m=0$, a.e., $\mu$. Also,  $W$ is m-isometry if and only if
$$\sum_{k=0}^{m}(-1)^{m-k}\binom{m}{k}W^{*^{k}}W^{k}=0$$
iff
$$\langle \sum_{k=0}^{m}(-1)^{m-k}\binom{m}{k}M_{J_{k}}f,f\rangle=0,$$
 for all $f\in L^2(\mu)$. Therefore $W$ is $m$-isometry if and only if $G^0_m=0$, a.e., $\mu$.
\end{proof}
\begin{cor}
The composition operator $C_{\varphi}$ is quasi-$m$-isometry if and only if 
$$\sum_{k=0}^{m}(-1)^{m-k}\binom{m}{k}h_{k+1}=0, \ \ \ \text{$\mu$-a.e},$$
and also, $C_{\varphi}$ is m-isometry if and only if
 $$\sum_{k=0}^{m}(-1)^{m-k}\binom{m}{k}h_{k}=0, \ \ \ \text{$\mu$-a.e}.$$
\end{cor}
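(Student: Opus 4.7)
The plan is to obtain this corollary as an immediate specialization of Theorem \ref{t1.6} to the case $u\equiv 1$, since then the weighted composition operator $W=M_uC_\varphi$ coincides with the plain composition operator $C_\varphi$. The whole task reduces to checking that, under this specialization, the expressions $J_{k+1}$ and $J_k$ appearing in Theorem \ref{t1.6} collapse to $h_{k+1}$ and $h_k$ respectively.

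First I would set $u\equiv 1$, so that for every $n\in\mathbb{N}$ we have $u_n=\prod_{k=0}^{n-1}u\circ\varphi^k=1$, and hence
\[
J_n=h_nE_n(|u_n|^2)\circ\varphi^{-n}=h_nE_n(1)\circ\varphi^{-n}=h_n.
\]
The convention $J_0=1$ is consistent with $h_0=1$. Equivalently, one can verify the recursive identity $h_n=hE(h_{n-1})\circ\varphi^{-1}$ coming from the display in the paragraph preceding Theorem \ref{t1.6}, which is exactly the $u\equiv 1$ specialization of $J_n=hE(J_{n-1}|u|^2)\circ\varphi^{-1}$.

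Next I would simply substitute this into the two characterizations of Theorem \ref{t1.6}. The quasi-$m$-isometry condition $G_m=\sum_{k=0}^{m}(-1)^{m-k}\binom{m}{k}J_{k+1}=0$ becomes $\sum_{k=0}^{m}(-1)^{m-k}\binom{m}{k}h_{k+1}=0$, while the $m$-isometry condition $G^0_m=\sum_{k=0}^{m}(-1)^{m-k}\binom{m}{k}J_{k}=0$ becomes $\sum_{k=0}^{m}(-1)^{m-k}\binom{m}{k}h_{k}=0$, both $\mu$-almost everywhere.

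There is essentially no obstacle here; the only point worth a sentence of justification is that the substitution $u\equiv 1$ is admissible (namely, that $C_\varphi$ itself is a bounded operator on $L^2(\mu)$, which is the standing hypothesis needed to apply Theorem \ref{t1.6}). Once this is noted, the two equivalences follow directly from the theorem.
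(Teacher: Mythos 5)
Your proposal is correct and matches the paper's (implicit) argument exactly: the corollary is obtained from Theorem \ref{t1.6} by setting $u\equiv 1$, so that $u_n\equiv 1$, $E_n(|u_n|^2)=1$, and hence $J_n=h_n$ for all $n$. The observation that $C_\varphi$ must itself be bounded for the theorem to apply is a sensible addition.
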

By using the binomial formula ($(x+y)^m=\sum_{k=0}^{m}\binom{m}{k}x^ky^{m-k}$,  $x,y\in \mathbb{R}$) and taking $\varphi=I$ in Theorem $\ref{t1.6}$ we have conditions under which the multiplication operator $M_u$ is quasi-$m$-isometry or $m$-isometry. 
\begin{cor}
The followings hold for multiplication operator $M_u$ on $L^2(\mu)$.
\begin{itemize}
 \item  
The multiplication operator $M_u$ is quasi-$m$-isometry if and only if 
$$|u(x)|^2(|u(x)|^2-1)^m=\sum_{k=0}^{m}(-1)^{m-k}\binom{m}{k}|u(x)|^{2(k+1)}=0,$$
for almost all $x\in X$ or equivalently $|u|(1-|u|^2)=0$, $\mu$-a.e., if and only if 
  $$\mu(\{x\in X:u(x)\neq 0\}\cap\{x\in X:|u(x)|\neq 1\})=0.$$\\

\item Multiplication operator $M_u$ is $m$-isometry if and only if
 $$(|u(x)|^2-1)^m=\sum_{k=0}^{m}(-1)^{m-k}\binom{m}{k}|u(x)|^{2k}=0,$$ 
 for almost all $x\in X$, or equivalently $|u|=1$, $\mu$-a.e.\\
 \end{itemize}
\end{cor}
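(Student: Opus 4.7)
The plan is to specialize Theorem \ref{t1.6} to the case $\varphi = I$ (the identity map) and identify what each ingredient becomes. When $\varphi = I$, we have $\mu \circ \varphi^{-1} = \mu$, so $h = 1$ and inductively $h_n = 1$. Moreover $\varphi^{-n}(\mathcal{F}) = \mathcal{F}$, so the conditional expectation $E_n$ is the identity, and $u_n = \prod_{k=0}^{n-1} u\circ \varphi^k = u^n$. Consequently $J_n = h_n E_n(|u_n|^2)\circ\varphi^{-n} = |u|^{2n}$, and in particular $J_0 = 1$ is consistent with this formula.

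With these values in hand, the quantities $G_m^0$ and $G_m$ of Theorem \ref{t1.6} become pointwise binomial sums. I would use the binomial identity $(x+y)^m = \sum_{k=0}^m \binom{m}{k} x^k y^{m-k}$ with $x = |u|^2$ and $y = -1$ to collapse
$$G_m^0 = \sum_{k=0}^{m}(-1)^{m-k}\binom{m}{k}|u|^{2k} = (|u|^2 - 1)^m,$$
and then factor out $|u|^2$ from the shifted sum to obtain
$$G_m = \sum_{k=0}^{m}(-1)^{m-k}\binom{m}{k}|u|^{2(k+1)} = |u|^2\bigl(|u|^2-1\bigr)^m.$$
Theorem \ref{t1.6} then says $M_u$ is $m$-isometric iff $(|u|^2-1)^m = 0$ $\mu$-a.e., and quasi-$m$-isometric iff $|u|^2(|u|^2-1)^m = 0$ $\mu$-a.e.

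The final step is to unpack these pointwise equations. Since $(|u(x)|^2 - 1)^m$ and $|u(x)|^2 (|u(x)|^2-1)^m$ are real-valued and vanish at $x$ precisely when their unraised factor vanishes, the first condition is equivalent to $|u| = 1$ $\mu$-a.e., and the second to $|u(x)| \in \{0,1\}$ for $\mu$-almost every $x$. The latter is the same as saying that the set $\{u \neq 0\} \cap \{|u| \neq 1\}$ has $\mu$-measure zero, which can also be rewritten compactly as $|u|(1-|u|^2) = 0$ $\mu$-a.e.

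There is no real obstacle here: the proof is entirely a routine specialization of the previous theorem combined with the binomial identity, and the only minor bookkeeping is checking that $J_n$ collapses to $|u|^{2n}$ under $\varphi = I$ and translating the vanishing of a product of nonnegative factors into the stated measure-theoretic conditions.
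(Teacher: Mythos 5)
Your proposal is correct and follows exactly the route the paper intends: the paper offers no separate proof of this corollary, instead stating just before it that the result follows "by using the binomial formula ... and taking $\varphi=I$ in Theorem \ref{t1.6}," which is precisely your specialization $h_n=1$, $E_n=\mathrm{id}$, $J_n=|u|^{2n}$ followed by collapsing the binomial sums. Your write-up in fact supplies the bookkeeping the paper leaves implicit.
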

Let $T\in \mathcal{B}(\mathcal{H})$ be a normal operator, i.e., $T^*T=TT^*$. Then for every $k\in \mathbb{N}$ we have $T^{*^k}T^k=(T^*T)^k$.
So we have 
$$B_m(T)=\sum_{k=0}^{m}(-1)^{m-k}\binom{m}{k}(T^*T)^{k}=(T^*T-I)^m.$$
It is clear that if $T$ is normal and $m$-isometry, for some $m\in \mathbb{N}$, then it is isometry and consequently it is unitary. Moreover, if $T$ is $p$-hyponormal ($(T^*T)^p\geq(TT^*)^p$) and $T^n$ is normal for some $n$, then $T$ is normal. Therefore if $T$ is $p$-hyponormal, $T^n$ is normal for some $n$ and $T$ is $m$-isometric, then $T$ is unitary. In the following we characterize normal $m$-isometry weighted composition operator $W=uC_{\varphi}$ on the Hilbert space $L^2(\mu)$.

\begin{prop}
Let $W=uC_{\varphi}$ be a normal bounded operator on $L^2(\mu)$ and $m\in \mathbb{N}$. Then $W$ is quasi-m-isometry if and only if
$$J(J-1)^m=0,\ \ \ \text{$\mu$-a.e},$$

in which $J=hE(|u|^2)\circ\varphi^{-1}1$.
Moreover, $W$ is m-isometry if and only if 
$$(J-1)^m=0, \ \ \ \text{$\mu$-a.e}.$$
\end{prop}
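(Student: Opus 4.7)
The plan is to reduce the general criterion of Theorem~\ref{t1.6} to a closed form by exploiting normality, and then conclude by the binomial theorem. The key observation is that if $T\in\mathcal{B}(\mathcal{H})$ is normal, then $T^{*k}T^k=(T^*T)^k$ for every $k\in\mathbb{N}$; this follows by a one-line induction, since $T^*T=TT^*$ allows each $T^*$ to commute past each $T$ inside the product. Applied to $W=uC_\varphi$ with $W^*W=M_J$, this yields $W^{*k}W^k=M_J^k=M_{J^k}$.

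On the other hand, the proof of Theorem~\ref{t1.6} already records that $W^{*k}W^k=M_{J_k}$ for every $k\in\mathbb{N}$. Comparing the two expressions (and using that $M_{J_k}=M_{J^k}$ forces equality of the symbols a.e.), I obtain the identity
\[
J_k=J^k\qquad \mu\text{-a.e.,\ for every }k\in\mathbb{N},
\]
with the convention $J_0=1=J^0$. This is the only substantive step in the argument, and it is exactly where normality is used; without it, $W^{*k}W^k$ need not equal $(W^*W)^k$.

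Once this reduction is in place, the two claims are immediate algebraic consequences. For the $m$-isometry part, Theorem~\ref{t1.6} says that $W$ is $m$-isometric iff $G_m^0=\sum_{k=0}^{m}(-1)^{m-k}\binom{m}{k}J_k=0$ $\mu$-a.e.; substituting $J_k=J^k$ and applying the binomial formula gives $G_m^0=(J-1)^m$, so the criterion becomes $(J-1)^m=0$ a.e. For the quasi-$m$-isometry part, Theorem~\ref{t1.6} says that $W$ is quasi-$m$-isometric iff $G_m=\sum_{k=0}^{m}(-1)^{m-k}\binom{m}{k}J_{k+1}=0$ $\mu$-a.e.; writing $J_{k+1}=J\cdot J^k$ and factoring out $J$ produces $G_m=J(J-1)^m$, so the criterion becomes $J(J-1)^m=0$ a.e.

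No genuine obstacle is anticipated: once the reduction $J_k=J^k$ is recorded, both equivalences are pure binomial bookkeeping, parallel to the derivation used in the multiplication-operator corollary (which is in fact the special case $\varphi=I$, for which $J=|u|^2$ automatically). I would therefore structure the write-up as (i) the normality lemma $T^{*k}T^k=(T^*T)^k$, (ii) the consequent identity $J_k=J^k$ a.e., and (iii) two short binomial computations closing the two equivalences.
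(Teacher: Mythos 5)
Your proposal is correct and follows essentially the same route as the paper: the paper likewise invokes the normality identity $T^{*^k}T^k=(T^*T)^k$ (stated just before the proposition) to replace $J_k$ by $J^k$ in the criteria of Theorem~\ref{t1.6}, and then applies the binomial formula to get $(J-1)^m$ and $J(J-1)^m$. Your write-up is in fact more careful than the paper's, which silently substitutes $J^{k}$ for $J_{k}$ without recording the intermediate identity $J_k=J^k$ $\mu$-a.e.
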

\begin{proof} As we computed in the proof of Theorem \ref{t1.6}, $W$ is quasi-m-isometry

$$G_m=\sum_{k=0}^{m}(-1)^{m-k}\binom{m}{k}J^{k+1}=J(J-1)^m=0,\ \ \ \text{$\mu$-a.e},$$

and $W$ is m-isometry if and only if 
$$G^0_m=\sum_{k=0}^{m}(-1)^{m-k}\binom{m}{k}J^{k}=(J-1)^m=0, \ \ \ \text{$\mu$-a.e}.$$
So we get the result.
\end{proof}
If we set $u\equiv $, then $W=uC_{\varphi}=C_{\varphi}$. So have the following corollary for composition operators.
\begin{cor}
Let $C_{\varphi}$ be a normal bounded operator on $L^2(\mu)$ and $m\in \mathbb{N}$. Then\\
\begin{itemize}
  \item  $C_{\varphi}$ is quasi-m-isometry if and only if
$h(h-1)^m=0,\ \ \ \text{$\mu$-a.e}$,
and consequently $C_{\varphi}$ is quasi-m-isometry iff $C_{\varphi}$ is quasi-isometry iff 
$h(h-1)^m=0,\ \ \ \text{$\mu$-a.e}$, 
iff
$h(h-1)=0,\ \ \ \text{$\mu$-a.e}$.
\item  $C_{\varphi}$ is m-isometry if and only if 
$(h-1)^m=0, \ \ \ \text{$\mu$-a.e}$,
and consequently $C_{\varphi}$ is m-isometry iff $C_{\varphi}$ is isometry iff
$(h-1)^m=0, \ \ \ \text{$\mu$-a.e}$, iff $h=1=0, \ \ \ \text{$\mu$-a.e}$.
\end{itemize}
\end{cor}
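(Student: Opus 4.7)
The plan is to obtain both bullets as a direct specialization of the preceding proposition to the case $u\equiv 1$. With this choice $W=C_\varphi$, and the quantity $J=hE(|u|^2)\circ\varphi^{-1}$ collapses to $J=h$, since $E(1)=1$ $\mu$-a.e. Substituting $J=h$ into the two conclusions of the proposition immediately yields the first equivalences in each bullet: $C_\varphi$ is quasi-$m$-isometric iff $h(h-1)^m=0$ $\mu$-a.e., and $m$-isometric iff $(h-1)^m=0$ $\mu$-a.e.

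To close the ``consequently'' chains, which assert that the conditions are independent of $m$ and already equivalent to their $m=1$ instances, I would invoke the nonnegativity of the Radon--Nikodym derivative, $h\geq 0$ $\mu$-a.e. For any nonnegative real scalar $t$, the equation $t(t-1)^m=0$ has solution set $\{0,1\}$ independently of $m\geq 1$, and $(t-1)^m=0$ has solution set $\{1\}$ independently of $m$. Pointwise $\mu$-a.e., this reduces $h(h-1)^m=0$ to $h(h-1)=0$ and $(h-1)^m=0$ to $h=1$. Applying the proposition with $m=1$ identifies the first reduced condition with the quasi-isometry of $C_\varphi$, while $h=1$ $\mu$-a.e.\ is exactly $C_\varphi^*C_\varphi=M_h=I$, that is, $C_\varphi$ is an isometry. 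This closes both chains of equivalences.

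There is no substantive obstacle: normality is used only through the preceding proposition (to collapse $C_\varphi^{*k}C_\varphi^k$ to $(C_\varphi^*C_\varphi)^k=M_{h^k}$ when $u\equiv 1$), and the remainder is a one-variable observation about polynomial zero-sets on $[0,\infty)$. The mildest point to watch is not to confuse $h\circ\varphi^{-1}$ with $h$ when specializing the formula for $J$, but with $u\equiv 1$ this ambiguity disappears.
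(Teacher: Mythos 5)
Your proposal is correct and follows exactly the paper's route: the paper obtains this corollary by setting $u\equiv 1$ in the preceding proposition (so that $J=h$), and the $m$-independence of the conditions is the same pointwise observation about zero sets of $t(t-1)^m$ and $(t-1)^m$ on $[0,\infty)$ that you spell out. Your write-up merely makes explicit the ``consequently'' chains that the paper leaves implicit (and correctly reads the paper's typo ``$h=1=0$'' as $h=1$).
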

As is known in the literature, the spectrum of the multiplication operator $M_J$ on $L^2(\mu)$ is the essential range of $J$, i.e, $\sigma(M_J)=\text{essential range}(J)$ . Also, it is known that for every normal bounded linear operator $T$ on a Hilbert space $\mathcal{H}$ we have $\sigma(T^*T)=\{|\lambda|^2:\lambda\in \sigma(T)\}$. Therefore 
$$\{|\lambda|^2:\lambda\in \sigma(W)\}=\sigma(W^*W)=\sigma(M_J)=\text{ess\ range(J)}.$$
So $\sigma(W)=\{\lambda\in \mathbb{C}: |\lambda|^{\frac{1}{2}}\in \text{ess\ range(J)}\}$. By these observations we find that normal-$m$-isometry and normal quasi-$m$-isometry weighted composition operators have finite spectrum.
\begin{cor}
Let $W=uC_{\varphi}$ be a normal bounded operator on $L^2(\mu)$, $\sigma(W)$ be the spectrum of $W$ and $m\in \mathbb{N}$. Then the followings hold:\\
\begin{enumerate}
\item If $W$ is quasi-m-isometry, then $\sigma(W)\subseteq \{0,1\}$ and so $\text{ess\ range(J)}\subseteq \{0,1\}$;

\item If $W$ is m-isometry, then $\sigma(W)\subseteq \{1\}$ and so $\text{ess\ range(J)}\subseteq \{1\}$ ;
\end{enumerate}
 If $W$ is $p$-hyponormal and $W^n$ is normal for some $n$, then (1) and (2) hold for $W$.

\end{cor}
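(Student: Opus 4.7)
The plan is to combine the preceding proposition---which characterizes normal (quasi-)$m$-isometric $W$ via the pointwise polynomial identities $J(J-1)^m=0$ and $(J-1)^m=0$ respectively---with the spectral identifications collected in the paragraph just above the corollary: $\sigma(M_J)=\mathrm{ess\ range}(J)$, the normal-operator spectral mapping identity $\sigma(W^*W)=\{|\lambda|^2:\lambda\in\sigma(W)\}$, and the formula $W^*W=M_J$.

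Suppose first that $W$ is normal. For part (1), the preceding proposition gives $J(J-1)^m=0$ $\mu$-a.e.; pointwise this forces $J(x)\in\{0,1\}$ for $\mu$-a.e.\ $x$, hence $\mathrm{ess\ range}(J)\subseteq\{0,1\}$. Threading this through
\[
\bigl\{|\lambda|^2:\lambda\in\sigma(W)\bigr\}=\sigma(W^*W)=\sigma(M_J)=\mathrm{ess\ range}(J)\subseteq\{0,1\}
\]
delivers the claimed spectral inclusion. Part (2) is handled in the same way: $(J-1)^m=0$ $\mu$-a.e.\ collapses to $J=1$ $\mu$-a.e., so $\mathrm{ess\ range}(J)\subseteq\{1\}$, and the same chain terminates in $\{1\}$.

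For the last clause about $p$-hyponormal $W$ with $W^n$ normal for some $n$, I would simply invoke the fact already recalled in the paragraph preceding the previous proposition, namely that any such operator is automatically normal; parts (1) and (2) then apply to $W$ verbatim with no additional argument.

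The only step of real substance is the pointwise resolution of the polynomial equations in $J$, which is a routine measure-theoretic manipulation, and the rest is bookkeeping of previously established identities. The main point to present carefully is the chain converting an a.e.\ statement about $J$ into a spectral inclusion for $W$; this is where the two spectral identifications in the preamble do all the work, so no further machinery is needed.
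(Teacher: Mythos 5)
Your route is the same as the paper's implicit one: the preceding proposition reduces everything to the pointwise equations $J(J-1)^m=0$ and $(J-1)^m=0$, which you correctly resolve to $J\in\{0,1\}$ a.e.\ and $J=1$ a.e., and the normality reduction in the final clause matches the remark the paper makes before that proposition. The step that does not work --- in your write-up and, it must be said, in the paper's own preamble --- is the last one. The identity
\[
\bigl\{|\lambda|^2:\lambda\in\sigma(W)\bigr\}=\sigma(W^*W)=\sigma(M_J)=\mathrm{ess\ range}(J)
\]
constrains only the \emph{moduli} of the spectral points. From $\mathrm{ess\ range}(J)\subseteq\{0,1\}$ you may conclude $|\lambda|\in\{0,1\}$ for every $\lambda\in\sigma(W)$, i.e.\ $\sigma(W)\subseteq\{0\}\cup\{\lambda:|\lambda|=1\}$; the inclusion $\sigma(W)\subseteq\{0,1\}$ does not follow, and ``the same chain terminates in $\{1\}$'' is not a valid inference, since the chain never sees the argument of $\lambda$, only its absolute value. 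The stronger inclusion is in fact false as stated: take $X=\{1,2\}$ with counting measure, $u\equiv 1$, and $\varphi$ the transposition of the two points; then $W=C_{\varphi}$ is a normal (indeed unitary and self-adjoint) isometry, hence an $m$-isometry for every $m$, yet $\sigma(W)=\{1,-1\}$. To close the gap you would need an additional hypothesis controlling the argument of the spectral points (for instance positivity of $W$), or you must weaken the conclusion to the modulus statement. As written, your argument genuinely establishes the containments of $\mathrm{ess\ range}(J)$ and that $|\lambda|\in\{0,1\}$ (resp.\ $|\lambda|=1$) on $\sigma(W)$, but not the claimed inclusions $\sigma(W)\subseteq\{0,1\}$ and $\sigma(W)\subseteq\{1\}$.
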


\begin{cor}
Let $C_{\varphi}$ be a normal bounded operator on $L^2(\mu)$, $\sigma(C_{\varphi})$ be the spectrum of $C_{\varphi}$ and $m\in \mathbb{N}$. Then the followings hold:\\
\begin{enumerate}
\item If $C_{\varphi}$ is quasi-m-isometry, then $\sigma(C_{\varphi})\subseteq \{0,1\}$ and so $\text{ess\ range(h)}\subseteq \{0,1\}$;

\item If $C_{\varphi}$ is m-isometry, then $\sigma(C_{\varphi})\subseteq \{1\}$ and so $\text{ess\ range(h)}\subseteq\{1\}$;
\end{enumerate}
If $C_{\varphi}$ is $p$-hyponormal and $C_{\varphi}^n$ is normal for some $n$, then (1) and (2) hold for $C_{\varphi}$.

\end{cor}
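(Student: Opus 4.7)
The plan is to obtain this corollary as a direct specialization of the preceding corollary about $W=uC_{\varphi}$, by taking $u\equiv 1$, so that $W=C_{\varphi}$ and $J = hE(|u|^{2})\circ\varphi^{-1} = h$. Since every hypothesis on $W$ then becomes the corresponding hypothesis on $C_{\varphi}$, and every conclusion stated in terms of $J$ becomes the corresponding conclusion on $h$, the result should follow line by line.

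First I would invoke the preceding proposition in the case $u\equiv 1$ to record that, for normal $C_{\varphi}$, quasi-$m$-isometry is equivalent to $h(h-1)^{m}=0$ $\mu$-a.e., and $m$-isometry is equivalent to $(h-1)^{m}=0$ $\mu$-a.e. From these pointwise polynomial identities it is immediate that $\mathrm{ess\,range}(h)\subseteq\{0,1\}$ in the quasi-$m$-isometric case and $\mathrm{ess\,range}(h)\subseteq\{1\}$ in the $m$-isometric case, since the essential range is precisely the set of values attained by $h$ outside a null set.

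Next I would translate this into a statement about $\sigma(C_{\varphi})$ using the two standard facts recalled in the paragraph preceding the $W$-corollary: $\sigma(M_{h})=\mathrm{ess\,range}(h)$, and $\sigma(T^{*}T)=\{|\lambda|^{2}:\lambda\in\sigma(T)\}$ whenever $T$ is normal. Applying these with $T=C_{\varphi}$ and $T^{*}T=M_{h}$ gives
\[
\{|\lambda|^{2}:\lambda\in\sigma(C_{\varphi})\}=\mathrm{ess\,range}(h),
\]
from which $\sigma(C_{\varphi})\subseteq\{0,1\}$ in case (1) and $\sigma(C_{\varphi})\subseteq\{1\}$ in case (2), exactly mirroring the derivation in the previous corollary.

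Finally, for the last clause I would appeal to the fact already recalled earlier in the paper: a $p$-hyponormal operator whose $n$-th power is normal for some $n$ must itself be normal. This reduces the $p$-hyponormal case to the normal case already handled, so (1) and (2) transfer automatically. The only real obstacle is bookkeeping, namely verifying that when $u\equiv 1$ every spectral identification used for $W$ specializes cleanly to $C_{\varphi}$; since $J$ collapses to $h$ in that situation, this verification is immediate.
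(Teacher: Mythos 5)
Your proposal follows exactly the route the paper intends: the paper gives no separate proof of this corollary, and it is plainly meant as the specialization $u\equiv 1$, $J=h$ of the preceding proposition and corollary, combined with the two spectral facts $\sigma(M_h)=\mathrm{ess\,range}(h)$ and $\sigma(T^*T)=\{|\lambda|^2:\lambda\in\sigma(T)\}$ for normal $T$, plus the reduction of the $p$-hyponormal case to the normal case. So in terms of approach there is nothing to distinguish.

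However, one step in your argument (inherited from the paper's own derivation for $W$) does not go through. From
\[
\{|\lambda|^2:\lambda\in\sigma(C_{\varphi})\}=\mathrm{ess\,range}(h)\subseteq\{0,1\}
\]
you may only conclude that $|\lambda|\in\{0,1\}$ for every $\lambda\in\sigma(C_{\varphi})$, i.e.\ $\sigma(C_{\varphi})\subseteq\{0\}\cup\{\lambda\in\mathbb{C}:|\lambda|=1\}$; this does not yield $\sigma(C_{\varphi})\subseteq\{0,1\}$ or $\sigma(C_{\varphi})\subseteq\{1\}$. The gap is real: the composition operator induced by an irrational rotation of the circle (with Lebesgue measure) is unitary, hence normal and isometric, hence $m$-isometric with $h\equiv 1$, yet its spectrum is the entire unit circle, not $\{1\}$. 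So the essential-range conclusions in (1) and (2) are correct and follow from your argument, but the asserted spectrum containments require either weakening to a statement about $|\lambda|$ or an extra hypothesis (e.g.\ $C_{\varphi}\geq 0$) that pins down the argument of the spectral values. You should flag this rather than assert that the containment "mirrors the derivation in the previous corollary," since that derivation has the same defect.
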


In the sequel we prove that if $J$ is bounded away from zero on some measurable set with positive measure, then $J_n$ is so, for all $n\in \mathbb{N}$.
\begin{thm}\label{t1.3}
If there exists a measurable set $A\subseteq S(J)$ with $\mu(A)>0$ and $\varphi^{-1}(A)\subseteq A$, such that $J\geq \delta$ a.e., on $A$ for some $\delta>0$, then for every $n\in\mathbb{N}$, we have $J_n\geq \delta^n$, a.e., on $A$.
\end{thm}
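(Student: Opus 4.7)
The approach I would take is induction on $n$, exploiting the recursion $J_n=hE(J_{n-1}|u|^2)\circ\varphi^{-1}$ recorded earlier in the paper together with the $\varphi^{-1}$-invariance of $A$. The base case $n=1$ is precisely the hypothesis $J_1=J\geq\delta$ on $A$, so all the work is in the inductive step.

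For the inductive step, I would assume $J_{n-1}\geq\delta^{n-1}$ $\mu$-a.e.\ on $A$ and test $J_n$ against the indicator of an arbitrary measurable $B\subseteq A$. Because $hE(\cdot)\circ\varphi^{-1}$ is defined through the factorization of a $\varphi^{-1}(\mathcal{F})$-measurable function rather than by evaluation at preimages, integrating is the cleanest way to convert the recursion into a usable inequality. The change-of-variables identity $\int F h\,d\mu=\int F\circ\varphi\,d\mu$ combined with the factorization should give
\begin{equation*}
\int_B J_n\,d\mu=\int_{\varphi^{-1}(B)} E(J_{n-1}|u|^2)\,d\mu=\int_{\varphi^{-1}(B)} J_{n-1}|u|^2\,d\mu,
\end{equation*}
the second equality being the defining property of conditional expectation, valid because $\varphi^{-1}(B)\in\varphi^{-1}(\mathcal{F})$.

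To close the induction I would use the invariance $\varphi^{-1}(A)\subseteq A$ to conclude $\varphi^{-1}(B)\subseteq A$, so the inductive hypothesis applies pointwise on $\varphi^{-1}(B)$ and gives $J_{n-1}\geq\delta^{n-1}$ there. Running the same identity in the case $n=1$ (with $J_0\equiv 1$) yields $\int_{\varphi^{-1}(B)}|u|^2\,d\mu=\int_B J\,d\mu\geq\delta\mu(B)$ from the base assumption. Combining,
\begin{equation*}
\int_B J_n\,d\mu\;\geq\;\delta^{n-1}\int_{\varphi^{-1}(B)}|u|^2\,d\mu\;\geq\;\delta^n\mu(B),
\end{equation*}
and since $B\subseteq A$ is arbitrary this forces $J_n\geq\delta^n$ $\mu$-a.e.\ on $A$. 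The main obstacle will be the first display: one must resist treating $\circ\varphi^{-1}$ as a genuine composition and instead remember that it only records the factor of a $\varphi^{-1}(\mathcal{F})$-measurable function through $\varphi$. Testing against indicators of measurable subsets of the invariant set $A$ is precisely what lets the $\varphi^{-1}$-invariance of $A$ propagate the lower bound multiplicatively in $\delta$.
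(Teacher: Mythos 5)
Your proposal is correct and follows essentially the same route as the paper's proof: both arguments test $J_n=hE(J_{n-1}|u|^2)\circ\varphi^{-1}$ against indicators of measurable $B\subseteq A$, use the change of variables and the conditional-expectation identity to reduce to $\int_{\varphi^{-1}(B)}J_{n-1}|u|^2\,d\mu$, and then exploit $\varphi^{-1}(B)\subseteq A$ to insert the lower bounds $\delta^{n-1}$ and $\delta$. The only difference is presentational: the paper writes out the case $n=2$ explicitly and appeals to induction, whereas you carry out the general inductive step.
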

\begin{proof}
As we defined $J_0=1$ and $J_n=h_nE_n(|u_n|^2)\circ\varphi^{-n}=hE(J_{n-1}|u|^2)\circ\varphi^{-1}$. Let $A\in \mathcal{F}$, $A\subseteq S(J)$ with $\mu(A)>0$ and $\varphi^{-1}(A)\subseteq A$, such that $J\geq \delta$ a.e., on $A$ for some $\delta>0$. Hence for every $B\in \mathcal{F}$ such that $B\subseteq A$ we have $\varphi^{-1}(B)\subseteq \varphi^{-1}(A)\subseteq A$ and so 
\begin{align*}
\int_BJ_2d\mu&=\int_X\chi_BhE(J|u|^2)\circ\varphi^{-1}d\mu\\
&\int_X\chi_B\circ\varphi E(J|u|^2)d\mu\\
&=\int_X\chi_{\varphi^{-1}(B)} J|u|^2d\mu\\
&\geq \delta\int_X\chi_{\varphi^{-1}(B)}|u|^2d\mu\\
&=\delta\int_X\chi_Bh(|u|^2)\circ\varphi^{-1}d\mu\\
&\geq \int_B\delta^2d\mu.
\end{align*}
This implies that $J_2\geq \delta^2$, a.e., on $A$. By induction we get that for every $n\in\mathbb{N}$ the inequality $J_n\geq \delta^n$, a.e., on $A$, holds.
\end{proof}
\begin{prop}
If there exists a measurable set $A$ such that $\varphi^{-1}(A_{\delta})\subseteq A_{\delta}$, such that $\mu(A_{\delta}=\{x\in A: \delta\leq J(x)<1\})>0$, for some $\delta>0$, then $W$ is not 2-isometry. 
\end{prop}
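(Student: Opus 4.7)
The plan is to argue by contradiction. Suppose $W$ is a $2$-isometry. By Theorem~\ref{t1.6} applied with $m=2$, this is equivalent to $G_2^0 = J_2 - 2J_1 + J_0 = 0$ $\mu$-a.e. Since $J_0 = 1$ and $J_1 = J$, I would first record the closed form
$$J_2 = 2J - 1 \qquad \mu\text{-a.e.}$$

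Next I would use the recursion $J_n = hE(J_{n-1}|u|^2)\circ\varphi^{-1}$ (derived in the introductory computation) to prove, by induction on $n\ge 1$, the identity $J_n = nJ - (n-1)$ $\mu$-a.e. The base cases $n=1,2$ are immediate. For the inductive step, linearity of the conditional expectation gives
$$J_n = hE\bigl(((n-1)J - (n-2))|u|^2\bigr)\circ\varphi^{-1} = (n-1)J_2 - (n-2)J_1,$$
and substituting $J_2 = 2J-1$ and $J_1 = J$ collapses the right-hand side to $nJ - (n-1)$, closing the induction.

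To conclude, I would observe that $J_n \ge 0$ $\mu$-a.e.\ (being the product of a Radon--Nikodym derivative and a non-negative conditional expectation), so the identity forces $J \ge (n-1)/n$ $\mu$-a.e.\ for every $n\in\mathbb{N}$. Intersecting these full-measure sets yields $J \ge 1$ $\mu$-a.e., i.e., $\mu(\{J<1\}) = 0$. Since $A_\delta \subseteq \{J<1\}$ and $\mu(A_\delta) > 0$ by hypothesis, this is the desired contradiction, and so $W$ cannot be a $2$-isometry.

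The main obstacle is verifying the linear recursion cleanly in the inductive step; the key point is that the map $f \mapsto hE(f|u|^2)\circ\varphi^{-1}$ is linear and carries the constant function $1$ to $J$ and the function $J$ to $J_2$, so the $2$-isometry relation for $J_2$ alone suffices to propagate the formula. I would also remark that the $\varphi$-invariance hypothesis $\varphi^{-1}(A_\delta) \subseteq A_\delta$ appears superfluous for this argument, since the identity $J_n = nJ - (n-1)$ holds $\mu$-a.e.\ on all of $X$; one only needs a positive-measure set on which $J<1$ to produce the contradiction.
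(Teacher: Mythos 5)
Your proof is correct, but it takes a genuinely different route from the paper's. The paper invokes Theorem \ref{t1.3} to obtain $J_2\geq\delta^2$ a.e.\ on $A_\delta$ (the only place the invariance hypothesis $\varphi^{-1}(A_\delta)\subseteq A_\delta$ is used) and then tries to conclude $J_2-2J_1+1>0$ on $A_\delta$ by a pointwise algebraic estimate. You instead extract a global structural consequence of the $2$-isometry identity: since $f\mapsto hE(f|u|^2)\circ\varphi^{-1}$ is linear and sends $1\mapsto J_1$ and $J_1\mapsto J_2$, the relation $J_2=2J_1-1$ propagates to $J_n=nJ-(n-1)$ for all $n$, and non-negativity of $J_n$ forces $J\geq (n-1)/n$ for every $n$, hence $J\geq 1$ a.e., which is incompatible with $\mu(A_\delta)>0$. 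Your route is cleaner and strictly more general: it needs only $\mu(\{J<1\})>0$, so you are right that the invariance hypothesis is superfluous for your argument (it is the paper's reliance on Theorem \ref{t1.3} that requires it). It also sidesteps a real defect in the paper's estimate: the displayed equality $\delta^2-2J_1(x)+1=(\delta-J_1(x))^2+1-J_1^2(x)$ is not an identity (the right-hand side equals $\delta^2-2\delta J_1(x)+1$), and in any case the lower bound $\delta^2-2J_1(x)+1$ can be negative when $\delta$ is small and $J_1(x)$ is close to $1$, so the paper's argument as written does not actually establish that $J_2-2J_1+1\neq 0$ on $A_\delta$, whereas yours does. Your identity $J_n=nJ-(n-1)$ is just the standard fact that $2$-isometries are expansive ($W^*W\geq I$), specialized via $W^{*n}W^n=M_{J_n}$, and is a robust tool here.
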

\begin{proof} Since $J\geq \delta$ a.e., on $A_{\delta}$ and $\varphi^{-1}(A_{\delta})\subseteq A_{\delta}$, then by Theorem \ref{t1.3} we get that $J_2\geq\delta^2$ a.e., on $A_{\delta}$. Hence for every $x\in A_{\delta}$ we have
$$J_2(x)-2J_1(x)+1\geq \delta^2-2J_1(x)+1=(\delta-J_1(x))^2+1-J^2_1(x).$$

So we have 
$$(\delta-J_1(x))^2+1-J^2_1(x)\leq J_2(x)-2J_1(x)+1.$$
Since $\mu(A_{\delta})>0$, then we get that $J_2(x)-2J_1(x)+1\neq 0$, $\mu$, a.e., and so $W$ is not 2-isometry.\\
\end{proof}

\begin{exam} (a) Let $X=[0, 1]$, $d\mu=dx$ and $\Sigma$
be the Lebesgue sets. Take $u(x)=\sqrt{x(x^2+1)}$ and $\varphi(x)=x^2$. 
 It follows that
$\varphi^{-1}(x)=\sqrt{x}$, $h(x)=\frac{1}{2\sqrt{x}}$ and for every $a,b\in [0,1]$ we have 
 $$\int_{\varphi^{-1}(a,b)}E(f)(x)dx=\int_{\varphi^{-1}(a,b)}f(x)dx=\int_{\sqrt{a}}^{\sqrt{b}}f(x)dx=\int_{a}^{b}\frac{f(\sqrt{x})}{2\sqrt{x}}dx.$$
 This means that $h(x)E(f)\circ\varphi^{-1}(x)=\frac{f(\sqrt{x})}{2\sqrt{x}}$. Hence 
 $$J(x)=h(x)E(|u|^2)\circ\varphi^{-1}(x)=\frac{x+1}{2}.$$
 Also, we have 
 $$J_2(x)=h(x)E(J_1|u|^2)\circ\varphi^{-1}(x)=\frac{(\sqrt{x}+1)(x+1)}{4}.$$
 inductively we get that for every $k\geq 2$, 
 $$J_k=\frac{(\sqrt{x}+1)(x+1)^{k-1}}{2^k}.$$
 Direct computations shows that 
 $$J_2(x)-2J_1(x)+1= \frac{(\sqrt{x}+1)(x-4\sqrt{x}+2)}{4}$$
 and clearly $J_2(x)-2J_1(x)+1\neq 0$, for all $x\in X$.
 Therefore the weighted composition operator $W=uC_{\varphi}$ is not $2$-isometric on the Hilbert space $L^2(\mu)$.\\
 (b) 
 Equip with the assumptions of part (a), if we take $u(x)=\sqrt{[x]}$, in which $[x]$ means the integer part of $x$, then we have 
  $$J(x)=h(x)E(|u|^2)\circ\varphi^{-1}(x)=\frac{[\sqrt{x} ]}{2\sqrt{x}}.$$
  It is clear that $J=0$  $\mu$ a.e., on $X$. Consequently, $J_n=0$  $\mu$ a.e., on $X$, for each $n\in \mathbb{N}$. This implies that $W^*B_mW=0$ and so $W$ is $m$-quasi isometric for all $m\in \mathbb{N}$, but $B_m=(-1)^mI$ and therefore $W$ is not $m$-isometric any $m\in \mathbb{N}$.\\

(c) If we replace $u(x)=\sqrt{x([x^2+1])}$ by 
$u(x)=\frac{\sqrt{x([x^2+1])}}{2}$ in the part (c), then we have 
$$J_n(x)=\left\{
  \begin{array}{ll}
    \frac{1}{2^{2n}}, & \hbox{$0\leq x<1$;} \\
    \frac{1}{2^{n}}, & \hbox{$x=1$.}
  \end{array}
\right.$$
and so we get that 
$$G_m=\frac{1}{4}\sum_{k=0}^{m}(-1)^{k}\binom{m}{k}(\frac{1}{2})^{2k}=\frac{1}{4}(\frac{3}{4})^m,\ \ \ \text{$\mu$-a.e},$$
and 
$$G^0_m=\sum_{k=0}^{m}(-1)^{k}\binom{m}{k}(\frac{1}{2})^{2k}=(\frac{3}{4})^m, \ \ \ \text{$\mu$-a.e}.$$
In this case the weighted composition operator $W$ is not $m$-quasi isometric and is not $m$-isometric, for any $m\in \mathbb{N}$.\\

(d)
Let $m = \{m_n\}_{n=1}^{\infty}$ be a sequence of positive real numbers. Consider the space $l^2(m) = L^2(\mathbb{N}, 2^{\mathbb{N}}, \mu)$, where $2^{\mathbb{N}}$ is the power set of natural numbers and $\mu$ is a measure on $2^{\mathbb{N}}$ defined by $\mu(\{n\}) = m_n$. Let $u = \{u_n\}_{n=1}^{\infty}$ be a sequence of non-negative real numbers. Let $\varphi: \mathbb{N} \to \mathbb{N}$ be a non-singular measurable transformation, i.e. $\mu \circ \varphi^{-1} \ll \mu$. Direct computation shows that
		
		\qquad $h(k) = \frac{1}{m_k} \sum_{j \in \varphi^{-1}(k)} m_j$, \quad $E_{\varphi}(f)(k) = \frac{\sum_{j \in \varphi^{-1}(k)} f_j m_j}{\sum_{j \in \varphi^{-1}(k)} m_j}$,
		for all non-negative sequence $f = \{f_n\}_{n=1}^{\infty}$ and $k \in \mathbb{N}$. For each $j \in \mathbb{N}$ we have 
		$$
		J_{1}(j)=\frac{1}{m_j} \sum_{i \in \varphi^{-1}(j)} u(i)^2 m_i,
		$$
and so  
$$J_{2}(j) = \frac{1}{m_j} \sum_{i \in \varphi^{-1}(j)} \frac{1}{m_i} \sum_{k \in \varphi^{-1}(i)} u(i)^2 u(k)^2m_i m_k.$$
By these observation we get that $W=uC_{\varphi}$ is 2-isometry on $l^2(m)$ if and only if 
$$\frac{1}{m_j} \sum_{i \in \varphi^{-1}(j)} \frac{1}{m_i} \sum_{k \in \varphi^{-1}(i)} u(i)^2 u(k)^2m_i m_k-2\frac{1}{m_j} \sum_{i \in \varphi^{-1}(j)} u(i)^2 m_i+1=0.$$
\end{exam}

\end{document}